\documentclass[12pt]{proc-l}
\usepackage[below]{placeins}


\usepackage{hyperref}
\usepackage[all]{hypcap}
\usepackage{amsmath,amscd,amssymb, epsfig,psfrag,subfigure}

\hypersetup{
pdfpagemode=FullScreen,
colorlinks=true}

\setlength{\topmargin}{0.5cm}
\setlength{\oddsidemargin}{-0.2cm}
\setlength{\evensidemargin}{-0.2cm}
\textheight = 22cm  
\textwidth = 16.2cm

\newtheorem{theorem}{Theorem}[section]
\newtheorem{lemma}[theorem]{Lemma}

\theoremstyle{definition}
\newtheorem{definition}[theorem]{Definition}

\newtheorem{conjecture}[theorem]{Conjecture}

\newcommand{\RR}{\mathbb{R}}

\def\vol{\mbox{\rm{Vol}}}

\def\d{\partial}

\def\Vol{\mbox{\rm{Vol}}}

\def\HH{\mathbb{H}}

\def\PP{\mathbb{P}}

\def\Vol{\mbox{\rm{Vol}}}

\def\HH{\mathbb{H}}

\def\PP{\mathbb{P}}

\def\split{\backslash\backslash}

\begin{document}

\title{The minimal volume orientable hyperbolic 2-cusped 3-manifolds} 

\author[Ian Agol]{%
        Ian Agol} 
\address{%
    University of California, Berkeley \\
    970 Evans Hall \#3840 \\
    Berkeley, CA 94720-3840} 
\email{%
     ianagol@math.berkeley.edu}  
\thanks{Agol partially supported by NSF grant DMS-0504975, and the Guggenheim foundation}
\subjclass[2000]{57M}

\date{%
 January 6, 2010}


\begin{abstract} 
We prove that the Whitehead link complement and the $(-2,3,8)$ pretzel link complement are the minimal volume orientable
hyperbolic 3-manifolds with two cusps, with volume $3.66...$ = 4 $\times$ Catalan's constant. We use topological arguments to establish
the existence of an essential surface which provides a lower bound on volume and strong
constraints on the manifolds that realize that lower bound.
\end{abstract} 

\maketitle
\section{Introduction}
Jorgensen and Thurston proved that the volumes of hyperbolic 3-manifolds are well-ordered. 
Moreover, if a volume is an $n$-fold limit point of smaller volumes (of order type $\omega^n$),
then there is a corresponding hyperbolic manifold of finite volume with precisely $n$ orientable
cusps.  The smallest volume one-cusped orientable
hyperbolic manifolds were identified by Cao and Meyerhoff \cite{CM}, one of which is the figure-eight knot complement,
with volume $2.0298\ldots= 2V_3$, where $V_3=1.01494\ldots$ is the volume of a regular ideal tetrahedron. 
Recently, Gabai, Meyerhoff and Milley have identified the smallest volume orientable hyperbolic 3-manifold
to be the Fomenko-Matveev-Weeks manifold, with volume $0.9427\ldots$ \cite{GMM07}. 
Moreover, they identify the 10 one-cusped orientable manifolds with volume $< 2.848$ 
(having five distinct volumes). It is an interesting question to find the smallest
volume manifolds with $n$ orientable cusps. Adams showed that
an $n$-cusped hyperbolic manifold has volume $\geq n V_3$ \cite{Adams88}. For orientable manifolds with two cusps, this was improved by Yoshida to a lower bound
of $2.43$ in \cite{Yoshida01}. 
In this paper, we prove that the Whitehead link 
complement $\mathbb{W}$ and the $(-2,3,8)$ pretzel link complement $\mathbb{W}'$ are the minimal volume orientable
hyperbolic 3-manifolds with two cusps, with volume $V_8=3.66...$ = 4 $\times$ Catalan's constant. These have been the smallest volume
known 2-cusped orientable hyperbolic 3-manifolds for quite some time, and we take it as given that these are known to have this volume (see \cite{Snappea} or \cite[p. 474]{Ratcliffe}). In fact, in Theorem \ref{surface volume}, we prove that if $M$ is a hyperbolic manifold
with a cusp and an essential surface disjoint from the cusp, then $\Vol(M)\geq V_8$, from which the result for 2-cusped manifolds follows in Theorem \ref{two cusps} by a result of Culler and Shalen \cite{CullerShalen84}. 

We now briefly outline the argument. Let $M$ be a compact orientable manifold with at least one boundary component, whose interior
$int(M)$ admits a finite volume hyperbolic metric and which contains a closed essential surface $X$ disjoint from the boundary. 
If the complement of 
$X$ has trivial ``\hyperlink{surfaceguts}{guts}'' (that is, the double  $D(M\split X)$  of $M$ split along $X$ is a graph manifold), then we use the JSJ decomposition of the complement of $X$ to show that the boundary
may be used to ``cut'' up the surface $X$, repeating until we get a surface which has non-trivial \hyperlink{surfaceguts}{guts}. Essentially what we are doing is replacing the initial surface $X$ which may have accidental parabolics, with a surface which has no accidental parabolics, but we must use a topological approach rather than geometric in order to keep track of the JSJ decomposition and get an acylindrical part of the \hyperlink{surfaceguts}{guts} in the end. 
We then apply
a volume estimate of \cite{AST05} and volume estimates
of Miyamoto \cite{Mi} giving lower bounds on volumes of hyperbolic manifolds with totally geodesic boundary, to get the 
volume lower bound on $int(M)$ (Theorem \ref{surface volume}). A recent sharp estimate \cite{CFW08} implies that we may characterize completely
the case of equality and identify the two smallest volume manifolds with two cusps by a simple combinatorial analysis (Theorem \ref{two cusps}). The
arguments of \cite{AST05, CFW08} depend strongly on Perelman's proof of the geometrization conjecture (see \cite{Per02, Per03, KleinerLott06,  MorganTian07, CaoZhu06, MorganTian08}). 

{\bf Acknowledgements:} We thank Chris Atkinson and Jeff Weeks for helpful suggestions.
We also thank Rupert Venzke  and Danny Calegari for a helpful discussion about the
minimal volume hyperbolic manifold with $n$ cusps and for 
correcting a mistaken conjecture in the first version of the paper.  We thank the referee for many helpful
comments, and for pointing out a strengthening of the main theorem. 

\section{Definitions}
In this section, we set up some notation and terminology for the theory of characteristic submanifolds, which is a relative version of the geometric decomposition. 

For a surface $X\subset M$, we'll use $M\split X$ to indicate 
the path-metric closure of $M\backslash X$. 
Let $M$ be an irreducible, orientable manifold. We will say that a properly embedded surface is {\it essential} if it is $\pi_1$-injective and $\partial \pi_1$-injective.
Let $(X, \partial X) \subset (M,\partial M)$ be a properly embedded essential surface. A {\it compressing annulus} for $X$ is an embedding 
$$ i: (S^1\times I, S^1\times\{0\} , S^1\times \{1\}) \hookrightarrow (M, X, \partial M)$$
such that 
\begin{itemize}
\item
$i_{\ast}$ is an injection on $\pi_1$,
\item 
$i(S^1\times I) \cap X = i(S^1\times \{0\})$, and 
\item
$i(S^1\times\{0\})$ is not \hypertarget{annular compression}{isotopic} in $X$ to $\partial X$. 
\end{itemize}

An {\it annular compression} of $(X,\partial X) \subset (M,\partial M)$ is a surgery 
along a compressing annulus $ i: (S^1\times I, S^1\times\{0\} , S^1\times \{1\}) \hookrightarrow (M, X, \partial M)$. 
Let $U$ be a regular neighborhood of $i(S^1\times I)$ in $M\split X$, let $\partial_1 U$ be the frontier of $U$ in
$M\split X$, and let $\partial_0 U=\partial U \cap (X\cup \partial M)$. Then let $X'= (X-\partial_0 U)\cup \partial_1 U$. 
The surface $X'$ is the \hyperlink{annular compression}{annular compression} of $X$ (see Figure \ref{annularcompression}). We remark that if $X$ is essential, then $X'$ is as well. 

\begin{figure}[htb] 
	\begin{center}
	\psfrag{m}{$\d M$}
	\psfrag{a}{$i(S^1\times I)$}
	\psfrag{F}{$X$}
	\psfrag{g}{$X'$}
	\psfrag{U}{$U$}
	\psfrag{v}{$\d_1 U$}
	\epsfig{file=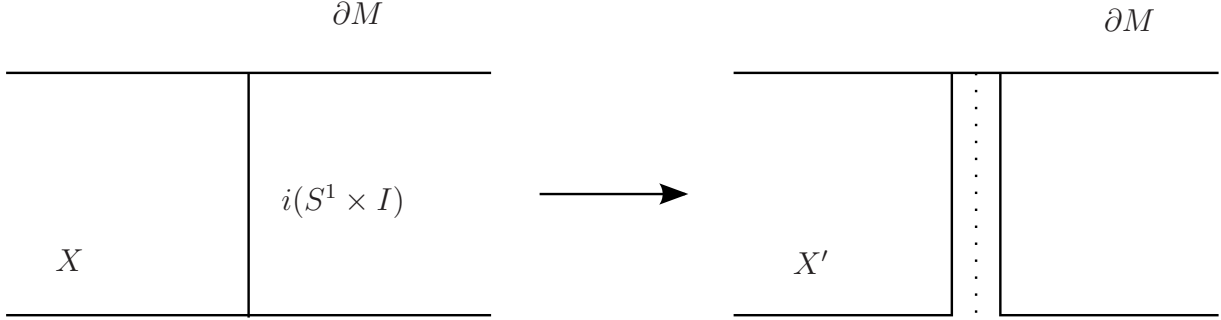, width=\textwidth}
	\caption{\label{annularcompression} An annular compression of a surface (cross the picture with $S^1$)}
	\end{center}
\end{figure}

The notion of a \hyperlink{pared}{pared manifold} was defined by Thurston to give a topological 
characterization of geometrically finite hyperbolic \hypertarget{pared}{3-manifolds} (see \cite{Mo} or \cite[Ch. 5]{CM04}). 
\begin{definition}
A {\it pared manifold} is a  pair $(M,P)$ where 
\begin{itemize}
\item
$M$ is a compact, orientable irreducible 3-manifold and
\item
$P\subset \partial M$ is a union of essential annuli
and tori in $M$,
\end{itemize}
such that
\begin{itemize}
\item
every abelian, noncyclic subgroup of $\pi_1(M)$ is peripheral
with respect to $P$ (i.e., conjugate to a subgroup of the fundamental
group of a component of $P$) and
\item 
every map $ \varphi:(S^1\times I, S^1\times \partial I) \to (M,P)$ that
is injective on the fundamental groups deforms, as maps of pairs, into $P$. 
\end{itemize}
$P$ is called the {\it parabolic locus} of the \hyperlink{pared}{pared manifold} $(M,P)$.
We denote by $\partial_0 M$ the surface $\partial M - int(P)$. 
\end{definition}

The motivation for the introduction of \hyperlink{pared}{pared manifolds} is the following geometrization theorem of Thurston:
\begin{theorem} \cite[Ch. 7]{CM04}
If $(M, P)$ is an oriented \hyperlink{pared}{pared 3-manifold}
with nonempty boundary, then there exists a geometrically finite uniformization of $(M, P)$.
\end{theorem}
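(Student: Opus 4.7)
The plan is to follow Thurston's uniformization strategy via a Haken hierarchy. Since $\partial_0 M = \partial M - int(P)$ is nonempty by hypothesis, I would first verify that it is incompressible in $M$: any compressing disk would either contradict irreducibility or violate one of the pared conditions. Hence $M$ is Haken, so I can choose a hierarchy $M = M_0 \supset M_1 \supset \cdots \supset M_n$ in which each $M_{i+1}$ is obtained by cutting $M_i$ along an essential surface $S_i$ with $\partial S_i \subset \partial_0 M_i$, terminating in a disjoint union of 3-balls, and with each $(M_i, P_i)$ inheriting a natural pared structure.

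I would then proceed by reverse induction on $i$. The base case is a pared 3-ball, which admits a geometrically finite uniformization directly via a Fuchsian-type construction: one builds a convex fundamental polyhedron whose face pairings realize the boundary curves around the pared annuli as parabolic. For the inductive step, given a geometrically finite uniformization of $(M_i, P_i)$, one recovers a uniformization of $(M_{i-1}, P_{i-1})$ by gluing along the two copies of $S_{i-1}$ that appear in $\partial_0 M_i$. This gluing is parametrized by the conformal structures on $S_{i-1}$, and a valid hyperbolic structure on $M_{i-1}$ corresponds to a fixed point of the skinning map $\sigma\colon T(S_{i-1}) \to T(S_{i-1})$ precomposed with the gluing involution.

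The main obstacle I anticipate is establishing this fixed point at each stage, which is the content of Thurston's Bounded Image Theorem. Its proof requires Thurston's double limit theorem, compactness properties of spaces of Kleinian groups, and the theory of measured laminations. Crucially, the pared acylindricity condition (that every essential map $(S^1\times I, S^1 \times \partial I) \to (M, P)$ deforms into $P$) is precisely what ensures the skinning image has bounded Teichm\"uller diameter, which makes the fixed-point argument go through. Once fixed points are obtained at every level of the hierarchy, the first pared condition guarantees that elements of $\pi_1(P)$ act parabolically while no accidental parabolics are introduced, so the resulting representation has the desired type-preserving, geometrically finite character. For the detailed execution of this machinery I would refer to \cite[Ch. 7]{CM04}.
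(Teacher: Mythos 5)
The paper offers no proof of this statement: it is quoted verbatim as Thurston's geometrization theorem for pared manifolds and attributed to \cite[Ch.~7]{CM04}, so there is no in-paper argument to compare yours against. Your outline is the standard Thurston strategy (Haken hierarchy, induction via gluing, skinning map fixed points, Bounded Image Theorem), and at that level of resolution it is the right shape of proof. But two specific steps as you state them are wrong.

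First, your opening claim that $\partial_0 M$ must be incompressible does not follow from the pared conditions, and is false: a genus-$g$ handlebody with $P=\emptyset$ satisfies every clause of the paper's definition (its fundamental group is free, so the abelian condition is vacuous, and there are no essential annuli with boundary in the empty parabolic locus), yet its boundary is compressible; it is uniformized by a Schottky group. The hierarchy must therefore be allowed to begin with compressing disks, and the inductive gluing step must include Klein--Maskit combination along disks, not only gluing along incompressible surfaces via the skinning map. Second, you assert that the pared annulus condition ``is precisely'' acylindricity and is what bounds the diameter of the skinning image. It is not: the pared condition only controls annuli with both boundary components in $P$, and a pared manifold can perfectly well contain essential annuli with boundary in $\partial_0 M$ --- indeed the characteristic annuli and the window, which this paper uses heavily, live in exactly such pared manifolds. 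The Bounded Image Theorem in its strong (bounded-diameter) form requires genuine acylindricity; the general Haken case needs the full fixed-point argument, a separate treatment of interval bundles and of manifolds fibering over the circle, and the double limit theorem. As a roadmap your proposal points at the right literature, but both of these gaps would derail an actual execution of it.
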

The term uniformization means that there is a hyperbolic manifold $N$ such that $N$ is
homeomorphic to $int(M)$, and whose parabolic subgroups correspond to $P$ (see \cite[Ch. 7]{CM04} for more details).

Let $(M,P)$ be a \hyperlink{pared}{pared manifold} such that $\partial_0 M$ is incompressible.
There is a canonical set of essential annuli $(A,\partial A)\subset (M,\partial_0 M)$, called the {\it characteristic annuli}, 
such that $(P,\partial P)\subset (A,\partial A)$, and
characterized (up to isotopy) by the property that they are the maximal collection of non-parallel essential
annuli such that every other essential annulus $(B,\partial B)\subset (M,\partial_0 M)$
may be relatively isotoped to an annulus $(B',\partial B')\subset (M,\partial_0 M)$
so that $B'\cap A=\emptyset$.
Each complementary component  $L\subset M\split A$  is 
one of  the
following types:

\begin{enumerate}
\item
$T^2\times I$, a neighborhood of a torus component of $P$,
\item 
$(S^1\times D^2, S^1\times D^2 \cap \partial_0 M)$, a solid torus with annuli in the boundary,
\item
$(I-bundles, \partial I -subbundles)$, where the $I$-bundles over
the boundary are subsets of $A$, or
\item
all essential annuli in $(L,\partial_0 M\cap L)$ are parallel in $L$ into 
$(L\cap A, \partial(L\cap A) ).$
\end{enumerate}

The \hypertarget{window}{union} of components
of type $(3)$, denoted $(W,\partial_0 W)\subset (M, \partial_0 M)$, is called the {\it window} of $(M,\partial_0 M)$. 
It is unique up to isotopy of pairs. 

\hypertarget{guts}{The} \hyperlink{pared}{pared submanifold} $(M-W, \partial(M-W)- \partial_0 M)$ is denoted $Guts(M,P)$.  Note that the parabolic locus of $Guts(M,P)$ will consist of characteristic annuli. If $M$ is compact \hypertarget{surfaceguts}{orientable} and $int(M)$ admits a metric of finite volume, and $(X,\d X)\subset (M,\d M)$ is an essential surface,
then define $Guts(X)=Guts(M\split X,\d M\split \d X)$. 
The components of type (4) are acylindrical \hyperlink{pared}{pared manifolds}, which have a complete hyperbolic
structure of finite volume with geodesic boundary \cite{Mo}. 
We will let $\Vol(Guts(M,P))$ denote the volume of this hyperbolic metric. If $D(M,P)$ is
obtained by taking two copies of $M$ and gluing them along the corresponding surfaces
$\d_0 M$, then $\Vol(Guts(M,P))=\frac12 \Vol( D(M,P) )$, where $\Vol(D(M,P))$ is
the simplicial volume of $D(M,P)$, {\it i.e.} the sum of the volumes of the hyperbolic
pieces of the geometric decomposition. If $\Vol(Guts(M,P))=0$, then $M$ is a ``book of $I$-bundles",
with ``pages" consisting of $W$, and ``spine" consisting of solid tori and $T^2\times I$  (see \cite[Example 2.10.4]{CM04} for more information). 

Let $\hypertarget{V8}{V_8}=4\cdot K = 3.66\ldots$, where $K$ is Catalan's constant
$$K= 1-1/9 + 1/25 -1/49 + \cdots + (-1)^n/ (2n+1)^2+ \cdots.$$ 
Then $V_8$ is also the volume of a regular ideal octahedron in $\HH^3$.

\section{Essential surfaces}
In this section, we prove the existence of an essential surface with non-trivial \hyperlink{surfaceguts}{guts} under certain hypotheses (Theorem \ref{gutsy surface}).
In the terminology of Culler and Shalen, we find a surface which is not a {\it fibroid} \cite{CS1}, {\it i.e.} whose complement is not a book of $I$-bundles. 
We start with an essential surface missing one cusp. If this surface has no accidental parabolics, we are done. If it does, we perform annular
compressions until we get a surface with no accidental parabolics and which is not a fibroid. 

\begin{lemma} \label{compression}
Let $(M,P)$ be a \hyperlink{pared}{pared manifold}, with \hyperlink{window}{window}
$(W,\d_0 W)\subset (M, \partial_0 M)$. Suppose that there is a component
$(J, K)\subset (W, \d_0 W)$ which is an $I$-bundle such that $\chi(J)<0$,
and such that $K\cap P\neq \emptyset$. Let $K_0$ be a component of $K\cap P$. 
Then the surface $\partial_0 M\cup K_0$ is compressible in $M$. 
\end{lemma}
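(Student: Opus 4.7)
\emph{Proof plan.} The plan is to produce an explicit compressing disk for $\partial_0 M\cup K_0$ from the $I$-bundle structure on $J$. Write $\pi\colon J\to\Sigma$ for the bundle projection, so $\chi(\Sigma)=\chi(J)<0$, and observe that $K_0$ is a vertical annulus sitting over some boundary circle $c\subset\partial\Sigma$. Two-sidedness of $K_0$ (a component of the parabolic locus) forces the bundle to be trivial over $c$, so $K_0\cong c\times I$ with the two circles of $\partial K_0$ lying on the $\partial I$-subbundle, which is contained in $\partial_0 M$. Since $\chi(\Sigma)<0$, $\Sigma$ is neither a disk nor an annulus, and standard surface topology yields a properly embedded \emph{essential} arc $\alpha\subset\Sigma$ with both endpoints on $c$ --- essential meaning not isotopic rel endpoints into $\partial\Sigma$. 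Setting $D:=\pi^{-1}(\alpha)\cong\alpha\times I$, we obtain an embedded disk in $J\subset M$ whose boundary is composed of two copies of $\alpha$ on the $\partial I$-subbundle and two vertical fibers on $K_0$, so $\partial D\subset\partial_0 M\cup K_0$.

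The crux of the argument is showing that $\partial D$ does not bound a disk in $\partial_0 M\cup K_0$. First, I would work inside the subsurface $S':=(\partial_0 M\cap J)\cup K_0$. Collapsing the annulus $K_0$ onto its core $c$, $S'$ is homotopy equivalent to the double $\Sigma\cup_c\Sigma$ in the trivial bundle case (with the analogous identification via the orientation double cover in the twisted case), and under this equivalence $\partial D$ corresponds to the closed loop $\alpha\cup_\partial\alpha$ obtained by traversing $\alpha$ in each sheet. Suppose this loop bounded a disk $E$ in $\Sigma\cup_c\Sigma$. An outermost-arc / innermost-circle analysis of $E\cap c$, together with the fact that $c$ cannot bound a disk in $\Sigma$ because $\chi(\Sigma)<0$, would produce a subdisk of $E$ lying in one copy of $\Sigma$ and bounded by $\alpha$ together with a subarc of $c$. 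This would force $\alpha$ to be boundary-parallel in $\Sigma$, contradicting essentiality. Hence $\partial D$ is essential in $S'$.

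The second step upgrades this to essentiality in the whole surface $\partial_0 M\cup K_0$. The frontier of $S'$ in $\partial_0 M\cup K_0$ consists of boundary circles of the other vertical annuli of $J$, which are characteristic annuli of $(M,P)$ and hence essential in $M$. Their boundary circles are therefore essential in $\partial_0 M$, and since they are disjoint from $\partial K_0$ they remain essential in $\partial_0 M\cup K_0$. A routine innermost-circle/outermost-arc argument then shows that any hypothetical compressing disk for $\partial D$ in $\partial_0 M\cup K_0$ can be cut down until it lies entirely inside $S'$, contradicting the previous step. Thus $D$ is a compressing disk and $\partial_0 M\cup K_0$ is compressible in $M$. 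The main obstacle is the bookkeeping in the first step --- in particular the twisted $I$-bundle case, where the $\partial I$-subbundle is a connected double cover of $\Sigma$ rather than two disjoint copies, so the ``doubled'' subsurface and the image of $\partial D$ must be identified carefully --- but once that identification is made the outermost-arc argument applies uniformly.
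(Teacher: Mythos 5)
Your compressing disk is exactly the one the paper uses --- the vertical disk $D=\pi^{-1}(\alpha)$ over an essential arc $\alpha$ in the base surface with both endpoints on the boundary circle beneath $K_0$ --- but you certify that $D$ is a genuine compression by a different argument. You work entirely two-dimensionally: you show $\partial D$ is essential in the subsurface $S'$ (horizontal boundary of $J$ union $K_0$) by collapsing $K_0$ and running an outermost-arc argument in the resulting doubled surface, where a disk bounded by $\partial D$ would force $\alpha$ to be boundary-parallel, and you then promote this to essentiality in all of $\partial_0 M\cup K_0$ using the fact that the frontier circles of $S'$ bound characteristic annuli and so cannot lie inside a disk. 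The paper instead argues three-dimensionally: if $D$ is non-separating it is automatically essential, and if it separates then, since $\alpha$ is essential in the base and the window component $J$ is $\pi_1$-injective in $M$, both pieces of $J$ cut along $D$ are non-simply-connected and $\pi_1$-injective in $M$, so neither side of $D$ is a ball and (by irreducibility) $\partial D$ cannot bound a disk in the boundary. Both routes are sound. The paper's version buys uniformity --- it never has to distinguish the product from the twisted $I$-bundle, and it needs no analysis of how $S'$ sits inside $\partial_0 M\cup K_0$ --- at the cost of invoking $\pi_1$-injectivity of the window in $M$. Your version is more elementary (pure surface topology plus essentiality of the characteristic annuli), but, as you acknowledge, the twisted case needs a careful identification of the quotient surface and of the image of $\partial D$; the cleanest repair there is to pass to the orientation double cover of $J$, which reduces you to the product case you have already handled.
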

\begin{proof}
Let $Q$ be a surface such that $I\to J\to Q$ is a fiber bundle. Then 
$I\to K\to \partial Q$ is a bundle over the boundary. Let $q_0\subset \partial Q$
be the component of $\partial Q$ such that $K_0$ fibers over $q_0$. 
Let $(\alpha,\partial \alpha) \subset (Q, q_0)$ be an essential arc. 
Then there is a disk $D\subset J$ which fibers over $\alpha$. 
The disk $D$ is essential, which implies that $\d_0 M\cup K_0$ is compressible. 
To see this, note that if $D$ is non-separating in $M$, then it is 
essential. So we may assume that $D$ separates $M$, and therefore
$D$ separates $J$. But $J$ is $\pi_1$-injective in $M$, and
since $\alpha$ is essential in $Q$, $D$ is essential in $J$. Thus,
each component of $J-D$ is $\pi_1$-injective in $J$, and therefore
in $M$, and neither component of $J-D$ is simply-connected. Therefore
neither component of $\overline{M-D}$ can be a ball, since it contains an non-simply
connected $\pi_1$-injective submanifold, which implies that $D$ is essential,
and therefore that $\partial_0 M \cup K_0$ is compressible. 
\end{proof}

\begin{lemma} \label{subpared}
Let $M$ be an orientable compact manifold such that $int(M)$ is hyperbolic of finite volume and
so that $P=\d M$ is the pared locus of $M$. Let $X\subset M$ be an essential surface. 
Then $(M\split X, P\split \d X)$ is a \hyperlink{pared}{pared manifold}. 
\end{lemma}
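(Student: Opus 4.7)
\emph{Proof proposal.}
The plan is to verify the four conditions defining a pared manifold --- irreducibility, $P\split\d X$ being a disjoint union of essential annuli and tori, every noncyclic abelian subgroup of $\pi_1$ being peripheral, and every $\pi_1$-injective annulus deforming into the parabolic locus --- for $(M\split X,P\split\d X)$, working componentwise on each component $Y$ of $M\split X$. The topological clauses are formal consequences of essentiality of $X$; the two group-theoretic clauses use the $\pi_1$-injectivity of the inclusion $Y\hookrightarrow M$ together with the pared structure of $(M,P)$ guaranteed by the finite-volume hyperbolic metric on $int(M)$ with cusps at $P$.

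Compactness and orientability of $Y$ descend from $M$. Irreducibility of $Y$ holds because any $2$-sphere in $Y$ bounds a ball in the irreducible $M$, and this ball must lie in $Y$ by incompressibility of $X$. Since $int(M)$ is cusped-hyperbolic, $P=\d M$ is a disjoint union of tori; by $\d\pi_1$-injectivity of $X$ each component of $\d X$ is essential in $P$, so $P\split\d X$ is a disjoint union of tori (those components of $P$ disjoint from $\d X$) and essential annuli, the latter remaining essential in $M\split X$ because the inclusion $Y\hookrightarrow M$ is $\pi_1$-injective.

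For the remaining two conditions, I would regard $\pi_1(Y)\leq\pi_1(M)$ as a Kleinian group and work in $\HH^3$. A noncyclic abelian $G\leq\pi_1(Y)$ is rank-two abelian in $\pi_1(M)$ and hence, by the pared condition on $(M,P)$, a $\ZZ^2$-parabolic subgroup fixing some $\xi\in\d\HH^3$ above a cusp torus $T\subset P$; likewise the core $\gamma$ of a $\pi_1$-injective annulus $(S^1\times I,S^1\times\d I)\to(Y,(P\split\d X)\cap Y)$ is parabolic in $\pi_1(M)$ fixing some $\xi$ above some $T$. Isotope $X$ so that, inside a small horocusp neighborhood $N_T$ of each cusp, $X\cap N_T$ is empty when $\d X\cap T=\emptyset$ and a disjoint union of annuli parallel to $T$ otherwise. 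Let $\widetilde Y$ denote the lift of $Y$ in $\HH^3\setminus\widetilde X$ whose closure contains $\xi$ (such a lift exists because $\gamma^n(x)\to\xi$ for any $x\in\widetilde Y$) and let $B$ be a horoball at $\xi$. If $\d X\cap T=\emptyset$, small $B$ is disjoint from $\widetilde X$, so $B\subset\widetilde Y$; hence $T\subset Y$ is a torus component of $P\split\d X$ into which $G$ or $\gamma$ is peripheral. If $\d X\cap T\neq\emptyset$, lifts of $X\cap N_T$ subdivide $B$ into slabs whose stabilizers in $\pi_1(T)$ are cyclic, generated by the boundary slope $\alpha$ of $\d X$ on $T$; this rules out $G\cong\ZZ^2$ in this case (giving the abelian condition) and forces $\gamma\in\langle\alpha\rangle=\pi_1(A)$ for the annular component $A$ of $T\cap Y$ containing the slab stabilized by $\gamma$ (giving the annulus condition).

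The main obstacle is the horoball analysis: one must choose $B$ small enough that the lifts of $X\cap N_T$ it meets are flat strips with cyclic stabilizer, verify that every rank-two abelian subgroup of the Kleinian group $\pi_1(Y)$ really does give rise to a genuine $G$-invariant horoball in $\widetilde Y$ corresponding to a torus cusp visible in $\d Y$, and identify the correct annular component of $T\cap Y$ in the other case. This reduces to the classical normalization of essential surfaces with respect to the thick-thin decomposition together with Margulis' lemma for Kleinian groups; granted these, the remaining verifications are bookkeeping on subgroups through the $\pi_1$-injective inclusion $Y\hookrightarrow M$.
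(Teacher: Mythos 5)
Your overall strategy is genuinely different from the paper's: the paper passes to the covering space $\tilde M\to M$ with $\pi_1(\tilde M)=\pi_1(Y)$ for each component $Y$, in which $Y$ lifts to a homotopy-equivalent core, and reads both group-theoretic conditions off the induced complete hyperbolic structure on $int(\tilde M)$ (parabolics of $\pi_1(Y)$ live in cusps of $\tilde M$, which correspond to components of $P\split\partial X$); you instead work in $\HH^3$ with a horoball/slab analysis. Your topological clauses and the case $\partial X\cap T=\emptyset$ are fine. But the key step in the case $\partial X\cap T\neq\emptyset$ has a genuine gap. You argue: $G\le\pi_1(Y)$ fixes $\xi$, the slabs of $B$ cut out by the lifted strips have cyclic stabilizer $\langle\alpha\rangle$, and ``this rules out $G\cong\ZZ^2$.'' That deduction needs $G$ to stabilize a single slab, but all you know a priori is that $G$ preserves $\widetilde Y\cap B$, which is a $G$-invariant \emph{union} of slabs; nothing you have said prevents it from being an infinite union on which a rank-two $G$ acts with cyclic point stabilizers and infinite orbits. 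Indeed, two non-adjacent slabs $S_j$ and $S_{j+2}$ can lie in the same component of $\HH^3\setminus\widetilde X$ whenever the two intervening strips belong to one and the same plane $\Pi$ (a single lift of a component of $X$ meeting $B$ twice). The same ambiguity infects the annulus case: ``the slab stabilized by $\gamma$'' is not well defined if $\widetilde Y\cap B$ has several components.

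To close the gap you must invoke $\partial\pi_1$-injectivity of $X$ at exactly this point: if a single lift $\Pi$ of a component $X_0$ of $X$ met $B$ in two strips, an arc in $\Pi$ joining them would project to an essential arc of $X_0$ homotopic rel $\partial M$ into $T$, i.e.\ a boundary compression, contradicting essentiality of $X$. Hence each plane meets $B$ in at most one strip, any two distinct slabs are separated by a plane lying entirely between them, so distinct slabs lie in distinct components of $\HH^3\setminus\widetilde X$, and $\widetilde Y\cap B$ is a single slab with stabilizer contained in $\langle\alpha\rangle$. (You should also upgrade ``$\gamma\in\langle\alpha\rangle=\pi_1(A)$'' to a deformation of the annulus \emph{as a map of pairs} into the parabolic locus, which is what the pared condition actually demands.) The obstacle you flag --- shrinking $B$ and invoking Margulis --- is not where the difficulty lies: normalization and Margulis give you the strips, but not the single-slab property, which is where the hypothesis that $X$ is essential, rather than merely $\pi_1$-injective, enters the argument.
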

\begin{proof}
Since each component $V$ of  $M\split X$ is $\pi_1$-injective, there is a covering space $\tilde{M}\to M$ such
that there is a lift $M\split X\to \tilde{M}$ which is a homotopy equivalence. Any
abelian non-cyclic subgroup of $\pi_1(V)$ must be peripheral in $M$ and
therefore in $\tilde{M}$ and $M\split X$. Any  map $ \varphi:(S^1\times I, S^1\times \partial I) \to (M\split X,P\split \partial X)$ that is injective on the fundamental groups deforms, as maps of pairs, into $P\split X\subset \tilde{M}$,
since components of $P$ correspond to cusps associated to a complete hyperbolic structure on $int(\tilde{M})$ induced
from the hyperbolic structure on $int(M)$. Thus, we see that $(M\split X, P\split \partial X)$ satisfies
the hypotheses of a \hyperlink{pared manifold}{pared manifold}.
\end{proof}

\begin{lemma} \label{paredgut}
Let $M$ be an orientable compact manifold such that $int(M)$ is hyperbolic of finite volume,
so that $P=\d M$ is the pared locus of $M$. 
Let $X\subset M$ be an essential surface. If $X$ 
has a compressing annulus, let $X_1$ be the
surface obtained by performing an \hyperlink{annular compression}{annular compression}
along $X$. Then $X_1$ has a pared annulus coming from $\d M$ in the
boundary of one of its gut regions.  
\end{lemma}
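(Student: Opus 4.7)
The plan is to identify $A_0 := \partial_0 U \cap \partial M$ — the footprint on $\partial M$ of the compressing annulus $i(S^1\times I)$ — as the pared annulus required. First we would apply Lemma \ref{subpared} to conclude that $(M\split X_1, P\split \partial X_1)$ is a pared manifold, noting that since $P = \partial M$ we have $\partial_0(M\split X_1) = X_1$. The annular compression alters $\partial X$ only by adjoining the two circles $\partial A_0$, so $A_0$ is a component of the parabolic locus $P\split \partial X_1$. The inclusion $(P,\partial P) \subset (A,\partial A)$ of the parabolic locus into the characteristic annuli then places $A_0$ among the characteristic annuli, and since $A_0 \subset \partial M$ it lies in the boundary of a unique JSJ piece $R$ of $(M\split X_1, P\split \partial X_1)$.

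Our goal is then to verify that $R \subset Guts(M\split X_1, P\split \partial X_1) = M\split X_1 - W$, i.e., that $R$ is not a type-3 $I$-bundle component of the window. Suppose for contradiction that $R$ were an $I$-bundle component $(J,K)$ of $W$ with $A_0$ a component of $K\cap (P\split \partial X_1)$; Lemma \ref{compression} would then give that $X_1\cup A_0$ is compressible in $M\split X_1$.

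To derive a contradiction, we would show that $X_1\cup A_0$ is $\pi_1$-injective in $M\split X_1$. The remark following the definition of annular compression ensures that $X_1$ is essential, so $\pi_1(X_1)\hookrightarrow \pi_1(M\split X_1)$. The boundary circles $c_1, c_2 \subset \partial X_1$ of $A_0$ cobound the annulus $A_0 \subset \partial M$ and so are already conjugate in $\pi_1(M\split X_1)$; a Van Kampen argument then shows that attaching $A_0$ to $X_1$ along $c_1\cup c_2$ introduces no new relations, whence $\pi_1(X_1\cup A_0) \hookrightarrow \pi_1(M\split X_1)$. Thus $X_1\cup A_0$ is incompressible, contradicting Lemma \ref{compression}.

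Therefore $R$ is of type 1, 2, or 4, so $R$ is a component of $Guts(M\split X_1, P\split \partial X_1)$, and $A_0 \subset \partial M$ is a pared annulus in the boundary of the gut region $R$, as claimed. The main obstacle is the $\pi_1$-injectivity check for $X_1 \cup A_0$; the essential observation making the argument go through is that $A_0$, lying inside $\partial M$, joins two curves of $\partial X_1$ that are already conjugate in $\pi_1(M\split X_1)$, so attaching $A_0$ imposes no genuinely new relation on $\pi_1$.
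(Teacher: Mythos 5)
Your overall strategy coincides with the paper's: locate the new pared annulus $A_0$ coming from $\partial M$, suppose it lies in the boundary of an $I$-bundle component of the window, and use Lemma \ref{compression} to manufacture a compression that yields a contradiction. The gap is in your justification that $X_1\cup A_0$ is $\pi_1$-injective. Knowing that $c_1$ and $c_2$ cobound the annulus $A_0\subset\partial M$, hence are conjugate in $\pi_1(M\split X_1)$, and that $\pi_1(X_1)$ injects, does \emph{not} imply that the resulting amalgam or HNN extension $\pi_1(X_1\cup A_0)$ injects into $\pi_1(M\split X_1)$: the new relation holding in the target is necessary but far from sufficient. By Britton's lemma, injectivity fails, for example, whenever the element represented by a spanning arc of $A_0$ (which conjugates $c_1$ to $c_2$) already lies in the image of $\pi_1(X_1)$ outside the cyclic edge group. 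Indeed, the entire content of Lemma \ref{compression} is that attaching a boundary annulus to an essential surface \emph{can} destroy incompressibility, so no purely formal ``no new relations are introduced'' argument can work here.

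The correct justification --- and the route the paper takes --- is that $X_1\cup A_0$ is isotopic to the original surface $X$: undoing the annular compression slides the two parallel copies of the compressing annulus, together with $A_0$, back across the product region $U$ onto the annular neighborhood of $a_1$ in $X$. Since $X$ is essential by hypothesis, $X_1\cup A_0$ is incompressible, and the contradiction with Lemma \ref{compression} goes through. A symptom of the gap is that your write-up never invokes the essentiality of $X$ itself, only that of $X_1$; but the conclusion genuinely depends on $X$ being incompressible, since the compressing disk produced by Lemma \ref{compression} is precisely a compression of $X$. With this one step repaired, the rest of your argument is essentially the paper's proof.
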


\begin{proof}
Let $A \subset M\split X$ be a compressing annulus, with 
$\partial A = a_0 \cup a_1$, such that $a_0 \subset \partial M$,
and $a_1\subset X$ is a closed curve in $X$ which is not
boundary parallel in $X$. By Lemma \ref{subpared}, $(M\split X,P\split \partial X)$
is a \hyperlink{pared}{pared manifold}. Compression along $A$ creates the essential
surface $X_1$ (see Figure \ref{annularcompression}). There will be a new component of the
pared locus of $M\split X_1$ which is an annulus with core
$a_0$. This annulus must be in a component of \hyperlink{surfaceguts}{$Guts(X_1)$},
because otherwise $a_0$ would lie in the boundary of the
core of the \hyperlink{window}{window} $W\subset M\split X_1$.
This implies that $X$ had a compression,
since reversing the \hyperlink{annular compression}{annular compression} to obtain $X$ from $X_1$ corresponds to adding a boundary annulus to an $I$-bundle of Euler characteristic $<0$, creating
a compression by Lemma \ref{compression}. 
\end{proof}

\begin{theorem} \label{gutsy surface}
Let $M$ be an orientable compact  3-manifold whose interior admits a hyperbolic metric of finite volume.
Suppose that $\partial M$ contains a torus $T_1$, and that there is a 2-sided essential surface
$X_0 \subset M$ such that $\partial X_0 \cap T_1=\emptyset$.  Then there
is an essential surface $X \subset M$ such that $\chi(Guts(X))<0$. 
\end{theorem}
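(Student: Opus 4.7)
The plan is to start from $X_0$ and, if its guts are non-trivial, take $X = X_0$; otherwise use the pared torus $T_1$ to perform a (possibly iterated) sequence of annular compressions yielding $X$ with $\chi(Guts(X)) < 0$. Set $P = \partial M$, so by Lemma \ref{subpared} the pair $(M\split X_0, P\split \partial X_0)$ is a pared manifold with a well-defined window and guts decomposition. If $\chi(Guts(X_0)) < 0$ we are done. Otherwise every gut component is of type $(1)$ (a $T^2\times I$ neighborhood of a pared torus) or type $(2)$ (a solid torus with annular boundary pattern), and $M\split X_0$ is a book of $I$-bundles relative to the pared locus.

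Under this assumption the next goal is to exhibit a compressing annulus for $X_0$ with one boundary on $X_0$ and the other on $\partial M$. The crucial input is that $T_1$ is a pared torus disjoint from $\partial X_0$: it sits inside a type $(1)$ collar $T^2\times I$ whose interior side is a torus $T_1'$, essential in $M\split X_0$ but $\partial$-parallel in $M$ since $M$ is hyperbolic. Tracing the characteristic decomposition across $T_1'$, one locates a window $I$-bundle component $(J,K)$ with $\chi(J)<0$ whose boundary pattern $K$ meets the pared locus in an annulus $K_0\subset \partial M$. Lemma \ref{compression} then produces a compressing disk for $\partial_0(M\split X_0)\cup K_0$, and standard surface surgery --- cutting the disk along its arcs on $K_0$ and recombining through sub-annuli of $K_0$ --- converts it into a compressing annulus for $X_0$ in the sense of the paper's definition.

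Now perform the annular compression to obtain an essential surface $X_1$. Lemma \ref{paredgut} supplies a pared annulus (coming from $\partial M$) in the boundary of one of the gut regions of $X_1$. Type $(1)$ pieces carry no pared annuli on their boundary, so this annulus lies in a type $(2)$ solid torus or in a type $(4)$ acylindrical piece. In the type $(4)$ case, $\chi(Guts(X_1)) < 0$ since acylindrical pared manifolds with proper boundary have negative Euler characteristic, and we take $X = X_1$. In the type $(2)$ case, the solid torus carrying the new pared annulus yields another compressing annulus for $X_1$, and I would iterate the procedure, strictly reducing a solid-torus complexity at each stage, until the resulting pared annulus is forced to sit in a type $(4)$ component.

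The main obstacle is the middle step: extracting the compressing annulus in the first place when $X_0$ has trivial guts. The delicate point is that the hyperbolicity of $M$ --- specifically the absence of essential tori other than those parallel to $\partial M$ --- must be used in a topological way to rule out pathological configurations in which $T_1$ is ``absorbed'' into a chain of type $(1)$ and type $(2)$ pieces that never meets a window $I$-bundle of negative Euler characteristic. Making this bridging argument precise, so that Lemma \ref{compression} can be applied, is the technical heart of the proof; once the compressing annulus is produced, Lemma \ref{paredgut} and the iteration above finish things off.
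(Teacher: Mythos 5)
Your overall strategy (iterate annular compressions along $\partial M$, using Lemma \ref{paredgut} to track a pared annulus into the guts) is the paper's strategy, but there are two genuine gaps. First, your production of the initial compressing annulus runs Lemma \ref{compression} backwards. That lemma is a \emph{contradiction} device: it is what Lemma \ref{paredgut} uses to show the new pared annulus cannot land in the window (else the original surface would have been compressible). It cannot be used to manufacture a compressing annulus for $X_0$, and your configuration need not even exist: if $X_0$ is closed (a case the theorem must cover, and the one used in the application to two-cusped manifolds), the pared locus of $M\split X_0$ consists entirely of tori sitting in type $(1)$ pieces, so no window component $(J,K)$ with $\chi(J)<0$ meets $P$ at all, and the hypothesis $K\cap P\neq\emptyset$ of Lemma \ref{compression} fails. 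The paper's route is direct: since $\chi(Guts(X_0))=0$ and $X_0$ misses $T_1$, the guts component incident with $T_1$ is a $T^2\times I$; a curve on $T_1$ is then parallel, through that product region and across characteristic annuli and window pieces, to an essential non-boundary-parallel curve on $X_0$, and this parallelism \emph{is} the compressing annulus. No compressing disk or surgery on disks is needed, and indeed such a disk would contradict essentiality of $X_0$ if it compressed $X_0$ itself.

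Second, your termination argument is not an argument: ``strictly reducing a solid-torus complexity at each stage'' is neither defined nor justified, and there is no a priori bound on how many times the new pared annulus can land in a type $(2)$ solid torus (you also cannot exclude it landing in a $T^2\times I$ region, which the paper notes also yields a further compression). The paper terminates the induction by an Euler characteristic count: an annular compression satisfies $\chi(X_{i+1})=\chi(X_i)$ while increasing the number of boundary components of $X_i$ by two, so the genus strictly decreases and the sequence of compressions must be finite; a maximal sequence therefore ends at some $X_k$ with $\chi(Guts(X_k))<0$. You should replace your complexity claim with this count, and replace the Lemma \ref{compression} step with the parallelism argument through the $T^2\times I$ component containing $T_1$.
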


\begin{proof}
Let $\partial M = T_{1}\cup \cdots \cup T_{n}$, where $T_{i}$ is a torus. Then $\partial X_0\subset T_2\cup \cdots \cup T_n$. 
Suppose that $\chi(Guts(X_0))=0$. Then $M\split X_{0}$ is a book of $I$-bundles. 
Now, we perform a maximal sequence of \hyperlink{annular compression}{annular compressions} along $\partial M$ to obtain
a sequence of essential surfaces $X_{1}, X_{2}, \ldots X_{k} \subset M$, where $X_{i+1}$ is obtained from $X_{i}$
by an \hyperlink{annular compression}{annular compression}. Since $X_0$ is disjoint from $T_1$, the component of \hyperlink{surfaceguts}{$Guts(X_0)$} incident with $T_1$ must be of the form $T^2\times I$, and thus $X_0$ has an \hyperlink{annular compression}{annular compression}. The compressing annulus comes from a curve in $T^2$ which is parallel to the boundary of a characteristic annulus of $M\split X_0$, and which is parallel to a curve in $T_1$ via a compressing annulus.  For $i>0$, $X_{i}$ will be a 2-sided essential surface,
with a component of the pared locus coming from $\d M$ lying in the boundary
of a gut region of $M\split X_i$, by Lemma \ref{paredgut}. If $\chi(Guts(X_i))=0$ for $i>0$,
then the components of \hyperlink{surfaceguts}{$Guts(X_i)$} must be solid tori or tori $\times I$. Since $M$ is simple,
a pared annulus in $\d M$ intersect the boundary of a solid torus  component of \hyperlink{surfaceguts}{$Guts(X_i)$} must be primitive,
and thus there is an \hyperlink{annular compression}{annular compression} of $X_i$ (see Figure \ref{induction}). Any region of \hyperlink{surfaceguts}{$Guts(X_i)$} of the form $T^2\times I$ will also give rise to an \hyperlink{annular compression}{annular compression} of $X_i$, as we observed for $X_0$. 
Since $\chi(X_{i+1})=\chi(X_i)$ and $X_{i+1}$ has two
more boundary components than $X_i$,
the number of \hyperlink{annular compression}{annular compressions} is finite, and thus at
some stage we arrive at $X_k$ such that $\chi(Guts(X_k))<0$. Let $X=X_k$. 
\end{proof}

\begin{figure}[htb] 
	\begin{center}
	\psfrag{G}{\hyperlink{surfaceguts}{$Guts(X_i)$}}
	\psfrag{a}{$a$}
	\psfrag{Y}{$X_{i+1}$}
	\psfrag{X}{$X_i$}
	\psfrag{M}{$\d M$}
	\psfrag{s}{$W$}
	\psfrag{H}{\hyperlink{surfaceguts}{$Guts(X_{i+1})$}}
	\epsfig{file=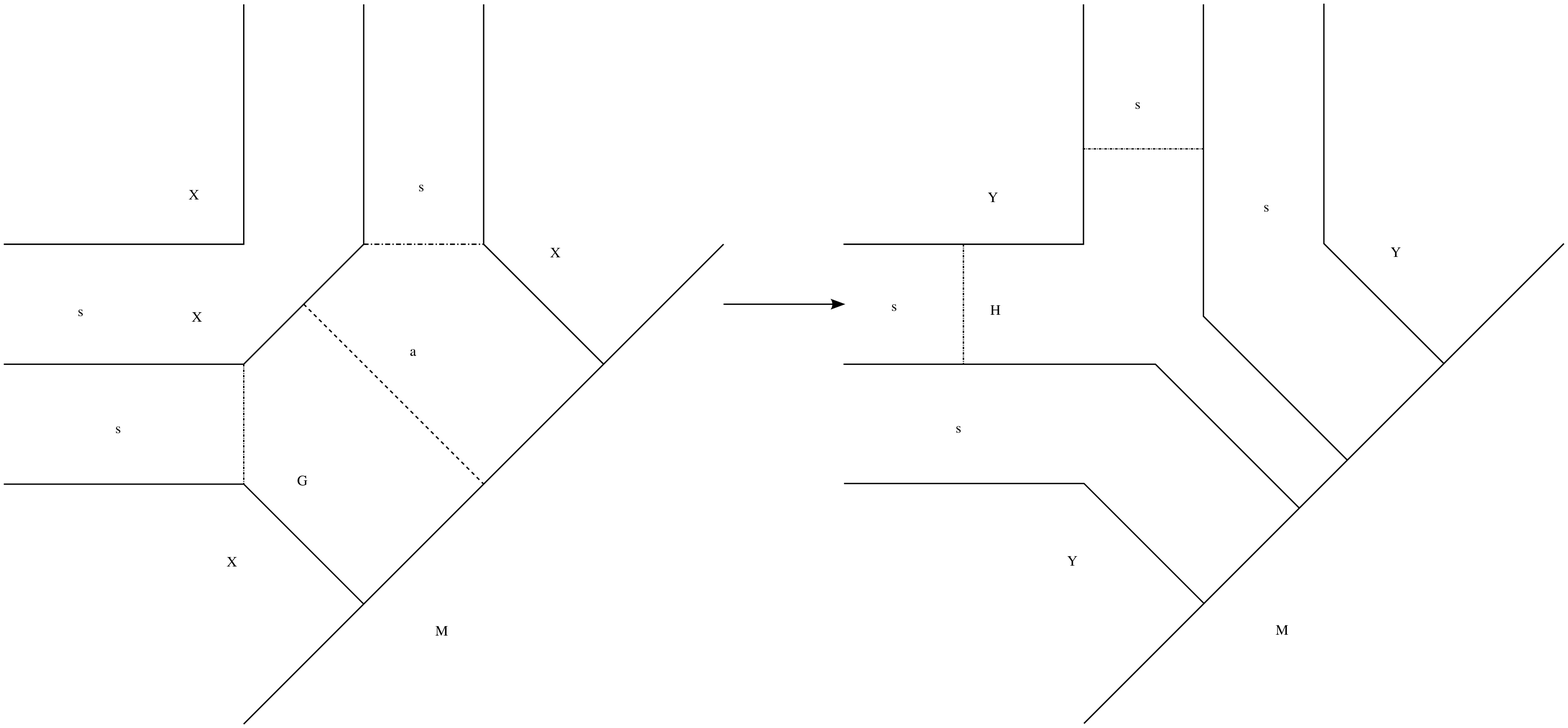, width=\textwidth}
	\caption{\label{induction} An annular compression $a$ of $X_i$ gives $X_{i+1}$ (cross the picture with $S^1$)}
	\end{center}
\end{figure} 

\begin{theorem} \label{surface volume}
Let $M$ be an orientable compact  3-manifold whose interior admits a hyperbolic metric of finite volume.
Suppose that $\partial M$ contains a torus $T_1$, and that there is a 2-sided essential surface
$X_0 \subset M$ such that $\partial X_0 \cap T_1=\emptyset$.  Then $\vol(M)\geq V_8$. If $\vol(M)=V_8$,
then $M$ has a decomposition into a single right-angled octahedron.  
\end{theorem}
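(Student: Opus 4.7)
My plan is to chain three ingredients: Theorem \ref{gutsy surface}, the volume inequality of Agol--Storm--Thurston \cite{AST05}, and Miyamoto's sharp lower bound \cite{Mi} on volumes of hyperbolic 3-manifolds with totally geodesic boundary, in the refined form of \cite{CFW08}. First, I apply Theorem \ref{gutsy surface} with the given torus $T_1\subset \d M$ and essential surface $X_0$ (whose boundary misses $T_1$) to produce an essential 2-sided surface $X\subset M$ with $\chi(Guts(X))<0$. By Lemma \ref{subpared}, $(M\split X, P\split \d X)$ is a \hyperlink{pared}{pared manifold}, so the acylindrical pared submanifold $Guts(X)$ admits a finite-volume hyperbolic structure with totally geodesic boundary on its $\d_0$-part.

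Next, the inequality of \cite{AST05} gives $\vol(int(M))\geq \vol(Guts(X))$, equivalently $2\vol(M)\geq \vol(D(Guts(X)))$ where the right hand side is the simplicial volume of the double. Since $\chi(Guts(X))<0$, $Guts(X)$ has nonempty totally geodesic boundary, so Miyamoto's theorem \cite{Mi} in the sharpened form of \cite{CFW08} gives $\vol(Guts(X))\geq V_8$, with equality precisely when $Guts(X)$ is assembled from a single regular ideal right-angled octahedron by isometrically identifying pairs of its boundary ideal triangles. Chaining these bounds yields $\vol(M)\geq V_8$.

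For the equality case $\vol(M)=V_8$, both inequalities above must be sharp. Equality in \cite{CFW08} pins down $Guts(X)$ as a single regular ideal right-angled octahedron (with some face pairings); equality in \cite{AST05} forces the complementary pieces of $M\split X$ --- the characteristic \hyperlink{window}{window} $W$ and the $T^2\times I$ collars of tori in $P$ --- to contribute no volume and to reattach to $Guts(X)$ so that $M$ itself inherits a decomposition into the same single right-angled ideal octahedron.

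The hard part will be this last equality analysis: I need the rigidity statement in \cite{CFW08} together with the pared/characteristic-submanifold structure of $M\split X$ to show that the characteristic annuli and $I$-bundle pieces fit against the eight ideal triangular faces of the octahedron in a way forced by the gluings on $Guts(X)$, so that reassembly produces $M$ from one octahedron rather than from a larger polyhedral piece. This is precisely why the sharp form of \cite{CFW08} is essential, and it is what sets up the combinatorial analysis of Theorem \ref{two cusps}.
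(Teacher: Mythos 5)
Your derivation of the inequality is exactly the paper's: Theorem \ref{gutsy surface} produces $X$ with $\chi(Guts(X))\leq -1$, the Agol--Storm--Thurston/CFW inequality gives $\Vol(M)\geq \Vol(Guts(X))$, and Miyamoto gives $\Vol(Guts(X))\geq -V_8\,\chi(Guts(X))\geq V_8$. (One small attribution slip: the sharp bound $\Vol(Guts(X))\geq -V_8\chi(Guts(X))$ with its equality case is Miyamoto's theorem itself; what \cite{CFW08} sharpens is the comparison $\Vol(M)\geq \Vol(Guts(X))$ and, crucially, its rigidity statement.)

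The gap is in the equality case, and it is exactly the step you flag as ``the hard part.'' You propose to show that the window $W$ and the $T^2\times I$ collars ``reattach to $Guts(X)$ so that $M$ inherits the decomposition,'' i.e.\ a combinatorial fitting argument of $I$-bundle pieces against the octahedron's faces. That is not how the argument goes, and it is not clear it could be made to work in that form. The paper's route is: first, arrange that no complementary region of $M\split X$ is an $I$-bundle (if one is, replace $X$ by the surface it double covers); then invoke the rigidity half of \cite[Theorem 5.5]{CFW08}, which says that equality $\Vol(M)=\Vol(Guts(X))$ forces $X$ to be \emph{totally geodesic}. Once $X$ is totally geodesic there is no window and no collar at all: $M\split X=Guts(X)=N$ is a single acylindrical piece with $\chi(N)=-1$ and $\Vol(N)=V_8$, and Miyamoto's equality case decomposes $N$ (hence $M$) into one regular ideal octahedron. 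In other words, the rigidity statement eliminates the very pieces you were planning to fit against the octahedron, rather than requiring you to control how they attach. Without knowing that the equality case of \cite{CFW08} takes this form (total geodesy of $X$), and without the preliminary reduction removing $I$-bundle complementary regions, your plan for the equality analysis does not close.
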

\begin{proof}
From Theorem \ref{gutsy surface}, there is an essential 
surface $X\subset M$ such that $\chi(Guts(X))\leq -1$. By
\cite[Theorem 5.5]{CFW08}, we have $\Vol(M)\geq \Vol(Guts(X))$ (see also \cite[Theorem 9.1]{AST05} on which the method of proof
of \cite{CFW08} is based). 
By \cite[Theorem 4.2]{Mi}, $\Vol(Guts(X))\geq -V_8 \chi(Guts(X))$.
Thus, we have $\Vol(M)\geq V_8$. 

If $\Vol(M)=V_8$, then
$\Vol(M)=\Vol(Guts(X))$. We may assume that no complementary
region of $M\split X$ is an $I$-bundle, since otherwise we could
replace $X$ with a surface which it double covers.  By \cite[Theorem 5.5]{CFW08},
$X$ is totally geodesic, and so $N=M \split X = Guts(X)$. Regardless of whether
$X$ is one- or two-sided, we have $\chi(X) = \frac12 \chi(\partial N) = \chi(N) = -1$,
so $X$ is either a punctured torus, a thrice-punctured sphere, or a twice-puncture $\RR\PP^2$. 
Since $N$ is an acylindrical manifold with 
$\chi(N)=-1$ and $\Vol(N)=V_8$, by Miyamoto's theorem \cite[Theorem 4.2]{Mi}
$N$ must have a decomposition into a single regular octahedron
of volume $V_8$. Thus $M$ decomposes into a single regular octahedron.  
\end{proof}

{\bf Remark:} The  referee  indicated the previous theorem as a natural 
generalization of our main result, and pointed out that there is a sharp example (see \cite[Section 6]{Dunfield99}). 
The example is m137 from the Snappea census \cite{Snappea}, and is obtained
by $0$-framed surgery on a 2-component link $L$ (see Figure \ref{m137}). To see that m137
contains a closed essential surface, note that there is a thrice-punctured sphere $P$ in the
complement of m137 (see the grey surface in the second Figure \ref{m137}). The
surgery along the boundary slope of $P$ gives $S^2\times S^1$, since the other component of $L$ is 
unknotted. However, since m137 is hyperbolic,
it does not fiber over $S^1$ with fiber $P$, and therefore by \cite[Theorem 2.0.3]{CGLS}, m137 must
contain a closed essential surface (which is obtained by carefully tubing together the boundary
components of two copies of $P$). 

\begin{figure}[htb]
  \epsfig{figure=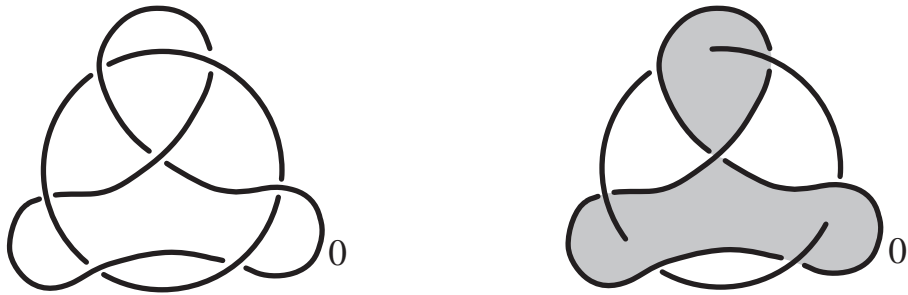,angle=0}
        \caption{\label{m137} m137 realizes the lower bound in Theorem \ref{surface volume}, and a thrice-punctured sphere $P$ in m137}
     
\end{figure}

\begin{theorem} \label{two cusps}
Suppose that $M$ is an orientable hyperbolic 3-manifold with two cusps, then 
$\vol(M)\geq V_8=3.66...$. 
If $\vol(M) = V_8$, then $M\cong \mathbb{W}$ or $M\cong \mathbb{W}'$.
\end{theorem}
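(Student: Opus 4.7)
The plan is to combine Theorem \ref{surface volume} with the existence theorem of Culler and Shalen for essential surfaces arising from ideal points of character varieties. Take a compact core $M_0$ of $M$, so that $\partial M_0 = T_1 \cup T_2$ is a disjoint union of two tori. To apply Theorem \ref{surface volume}, I need a 2-sided essential surface $X_0 \subset M_0$ disjoint from one of the tori, say $T_1$. Because $M$ has at least two cusps, the component of the $\PSL_2(\CC)$-character variety through the discrete faithful representation has complex dimension at least two, so there is an algebraic curve in it along which the trace of a chosen peripheral generator of $\pi_1(T_1)$ remains bounded. Any ideal point of this curve gives, via the Culler--Shalen machinery of \cite{CullerShalen84}, an essential surface whose associated action on a Bass--Serre tree has the peripheral subgroup of $T_1$ fixing a vertex; the surface can therefore be realized disjoint from $T_1$. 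Theorem \ref{surface volume} then yields $\vol(M) \geq V_8$.

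For the equality case, Theorem \ref{surface volume} forces $M$ to decompose into a single regular ideal hyperbolic octahedron. Since $M$ has exactly two cusps, the six ideal vertices of the octahedron must partition into exactly two equivalence classes under the face-pairing. The regular ideal octahedron has $8$ triangular faces and $12$ edges of dihedral angle $\pi/2$, so an admissible orientation-reversing face-pairing consists of $4$ pairs of faces and must collapse the $12$ edges into exactly $3$ edge classes, each of size $4$, in order to satisfy the angle-sum condition $4 \cdot \pi/2 = 2\pi$. The partition of the $6$ vertex links (which are Euclidean squares) into two cusp tori must also be consistent. Up to the orientation-preserving symmetries of the octahedron, these constraints cut out a finite list, and a direct enumeration shows that it contains exactly two orientable examples, which are known to be realized by $\mathbb{W}$ and $\mathbb{W}'$.

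The main obstacle is the combinatorial enumeration in the equality case. While each individual condition (edge classes of size four, exactly two cusp classes, orientation compatibility) is easy to check, one must track them together and quotient by the symmetry group of the octahedron without missing or double-counting any gluing. A shortcut, as alluded to in the introduction, is to compare the decomposition data against the low-volume portion of the SnapPea census, which independently identifies $\mathbb{W}$ and $\mathbb{W}'$ as the only orientable 2-cusped hyperbolic manifolds admitting such a decomposition and hence the only ones of volume $V_8$.
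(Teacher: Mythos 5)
For the inequality $\vol(M)\geq V_8$ your route is essentially the paper's: both reduce to Theorem \ref{surface volume} via the Culler--Shalen existence theorem. Two caveats. First, the paper simply cites \cite[Theorem 3]{CullerShalen84}, which produces a separating essential surface with non-empty boundary on $T_1$ and disjoint from the other tori (one then applies Theorem \ref{surface volume} with the roles of the tori exchanged); your own sketch of the Culler--Shalen step is not quite right as stated, since bounding the trace of a \emph{single} peripheral generator at an ideal point only forces that one element to be elliptic on the Bass--Serre tree, not the whole rank-two peripheral subgroup, so it does not by itself let you push the surface off $T_1$. This is repairable by citing the theorem rather than re-deriving it, so I treat it as a presentation issue.

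The genuine gap is in the equality case. Your plan --- enumerate all orientation-compatible face-pairings of the regular ideal octahedron with three edge classes of size four and two ideal-vertex classes, modulo the $48$-element symmetry group --- is a valid characterization of the candidates, but the enumeration is precisely the content that needs to be proved and you do not carry it out: there are $105$ ways to pair the eight faces and several admissible isometries per pair, so ``a direct enumeration shows'' is an assertion, not an argument. The fallback to the SnapPea census is likewise not a proof as written; the paper cites the census only for the known volumes of $\mathbb{W}$ and $\mathbb{W}'$, not for completeness of a classification. What you are missing is the structural input from Theorem \ref{surface volume} that makes the enumeration tractable by hand: in the equality case the surface $X$ is \emph{totally geodesic} with $\chi(X)=-1$, and $N=M\split X=Guts(X)$ is an acylindrical manifold with totally geodesic boundary realizing Miyamoto's bound. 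Hence four alternating (``white'') faces of the octahedron lie on $\partial N$ and only the other four (``black'') faces are glued internally, which yields just four manifolds $N_1,\dots,N_4$ with geodesic boundary; one then classifies the involutions of $\partial N_i$ (two pairs of pants, or a four-punctured sphere glued to itself by an antipodal isometry) that reassemble a two-cusped $M$, using the isometry groups of these boundary surfaces to rule out $N_3$ and pin down uniqueness. Either you should adopt that reduction, or you must actually exhibit and verify the full list of face-pairings your constraints allow.
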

\begin{proof}
By \cite[Theorem 3]{CullerShalen84},
there is an orientable connected essential separating surface $X_{0}\subset M$ such that $\partial X_{0}\cap (T_{2}\cup \cdots \cup T_n)=\emptyset$,
and $\partial X_{0} \cap T_{1}$ is a non-empty union of non-contractible simple closed curves. Moreover, we may assume that
there is no compressing annulus of $X_0$ along $T_1$. By Theorem \ref{surface volume}, $\Vol(M)\geq V_8$. 

Now, suppose $\Vol(M)=V_8$. By Theorem \ref{surface volume}, $M$ contains an essential surface $X$ with $\chi(X)=-1$, and the complement of $X$ has a decomposition into a regular octahedron. 

\begin{figure}[htb]
  \subfigure{\epsfig{figure=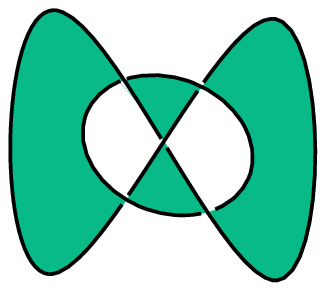,angle=0,width=.45\textwidth}}\qquad
    \subfigure{\epsfig{figure=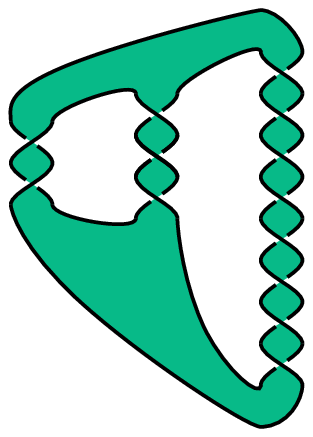,angle=0,width=.45\textwidth}}
        \caption{\label{Whitehead} A twice-punctured $\RR\PP^2$
        in the Whitehead and $(-2,3,8)$-pretzel link complements}
\end{figure}

To finish the argument, we take an octahedron $O$ and color
the faces alternately black and white. The argument here is
somewhat indirect. We know we have two examples, $\mathbb{W}, \mathbb{W}'$,
and we need to show that there are no more. We glue the black
faces of $O$ together in all possible ways to get the possible
manifolds $N$. Up to isometry, we get four manifolds $N_i, i=1,2,3,4$ with
totally geodesic boundary and rank one cusps (see Figure \ref{octahedra}, where
the black faces being paired are labelled $A$ and $B$).
\begin{figure}[htb] 
	\begin{center}
	\psfrag{A}{$A$}
	\psfrag{B}{$B$}
	\psfrag{a}{$N_1$}
	\psfrag{b}{$N_2$}
	\psfrag{c}{$N_3$}
	\psfrag{d}{$N_4$}
	\epsfig{file=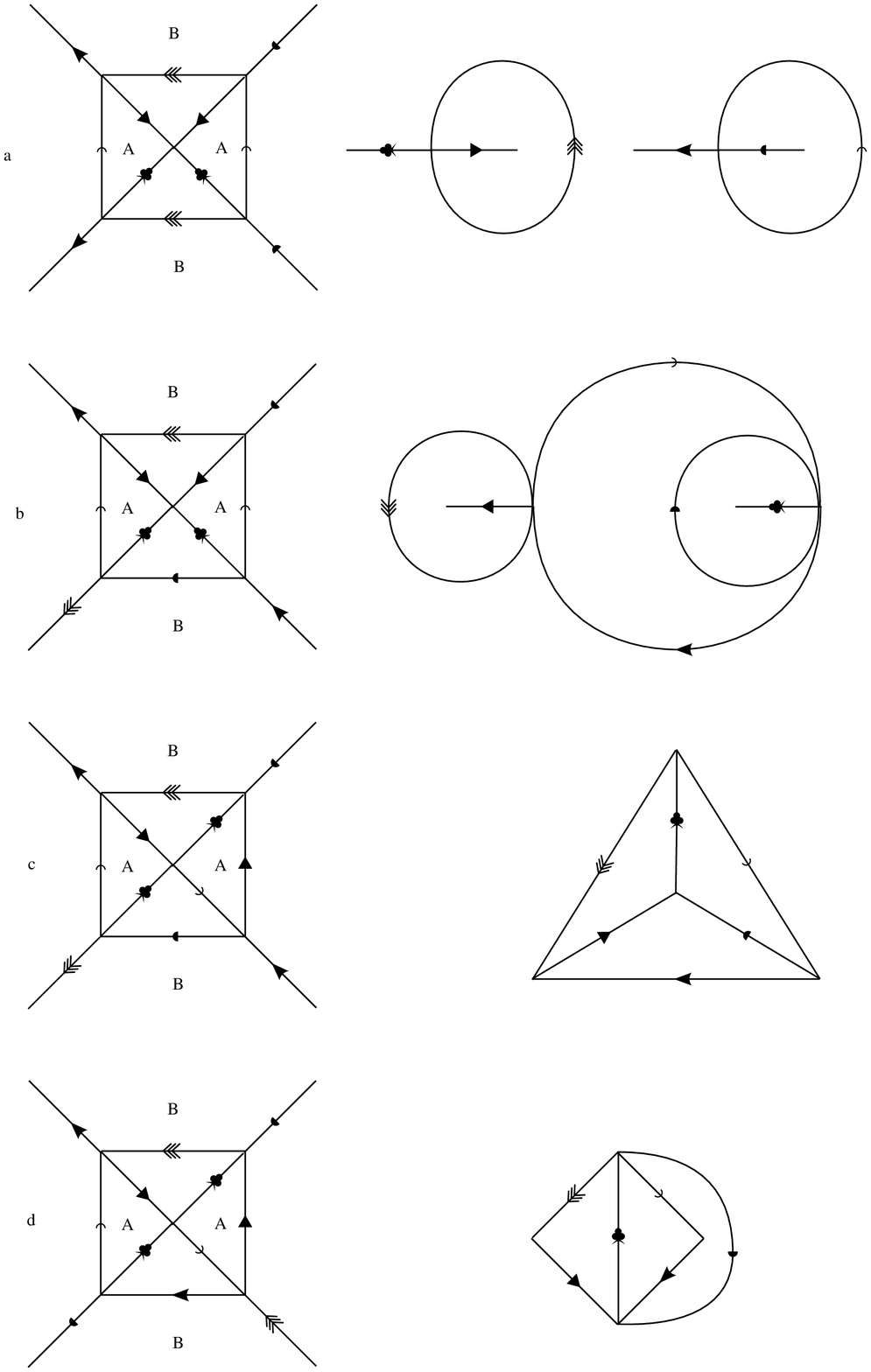, width=.75\textwidth}
	\caption{\label{octahedra} Gluing faces $A$  and $B$ of an octahedron together in pairs  to get manifolds with geodesic
	boundary of volume $V_8$}
	\end{center}
\end{figure} 
Then $\d N_1$ is two pairs of  pants, and $\d N_i$ is a single
four-punctured sphere, for $i=2,3,4$ ($\d N_i$ is shown on the
right of Figure \ref{octahedra}). Gluing the two boundary
components of $N_1$ together in a way that gives two cusps 
gives $\mathbb{W}$ the Whitehead link complement. There are
three other manifolds which have boundary a four-punctured
sphere. The boundary gets glued to itself by an antipodal
map. To get a manifold with two cusps from $N_i$, the antipodal map
must also identify the rank one cusps of $\d N_i$ which share
a common rank one cusp of $N_i$. 
  Only $N_2$ and $N_4$ have boundary which
admits an antipodal isometry, so that the quotient is 
$X\cong \RR\PP^2-\{x,y\}$. To see that $\d N_3$ cannot have
such an antipodal map, notice that $\d N_3$ has isometry group
$S_4$, for which there is no isometry which acts as an antipodal map. The antipodal isometry preserves
the induced triangulation of $\d N_i$ shown on the right of 
Figure \ref{octahedra} for $i=2,4$. One may check that the quotient by these isometries
give a manifold with two cusps. Any other isometry which had
this property must be the same, since the product of two such
isometries must fix all four cusps of $\d N_i$, which implies
that it is the identity since this is the only holomorphic map
of $S^2$ fixing four points. One quotient by this map gives
back $\mathbb{W}$, since $\mathbb{W}$ has a twice-punctured $\RR\PP^2$
in its complement (see Figure \ref{Whitehead}).
 The other example must give the $(-2,3,8)$-pretzel link complement $\mathbb{W}'$ (in fact, $N_2$
 corresponds to $\mathbb{W}'$ and $N_4$ corresponds to $\mathbb{W}$, after one quotients by the unique
 antipodal map of the boundaries).  
\end{proof}

\section{Conclusion}

It would be interesting to find the minimal volume orientable hyperbolic
manifolds with $n$ cusps. We conjecture that the
minimal volume $n$-cusped manifold is realized by a hyperbolic chain link which is minimally
``twisted'' (see \cite{NR92} for the definition of chain links) for $n\leq 10$.
Rupert Venzke has pointed out to us that for links with $n$ cusps,
where $n\geq 11$, the $n-1$-fold cyclic cover $\mathbb{W}_n$ over one component of $\mathbb{W}$ has volume less
than the smallest volume twist link with $n$ cusps, which is a
Dehn filling on $\mathbb{W}_{n+1}$. 
 It is probably impossible to use the methods in this
paper to determine the smallest volume hyperbolic manifold with 
$n$ cusps. However, it should be possible to 
prove the following:

\begin{conjecture}
Let $v_n$ be the minimal volume of an $n$-cusped orientable hyperbolic 3-manifold. Then 
$$\underset{n\to\infty}{\lim} v_n/n = V_8.$$
\end{conjecture}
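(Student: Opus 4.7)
The conjecture is a two-sided asymptotic statement, so I would split it into $\limsup v_n/n\leq V_8$ and $\liminf v_n/n\geq V_8$. The upper bound is immediate from the Whitehead-cover construction mentioned by the author in the preceding paragraph: let $\mathbb{W}_n$ denote the $(n-1)$-fold cyclic cover of $\mathbb{W}$ corresponding to the homomorphism $H_1(\mathbb{W})\to\ZZ/(n-1)$ that sends one meridian to $1$ and the other to $0$. Since the two components of $\mathbb{W}$ have linking number zero, one component lifts to $(n-1)$ separate cusps while the other remains a single cusp, so $\mathbb{W}_n$ has exactly $n$ cusps and volume $(n-1)V_8$ by multiplicativity of volume under covers. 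Hence $v_n\leq (n-1)V_8$ and $\limsup v_n/n\leq V_8$.

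For the lower bound my plan is to extend the surface/guts machinery of Theorem \ref{surface volume} from one cusp to $n$. Given a hyperbolic manifold $M$ with cusps $T_1,\ldots,T_n$, I would try to produce a disjoint essential surface system $X\subset M$ such that each cusp meets a non-trivial (acylindrical) component of $Guts(X)$, forcing $\chi(Guts(X))\leq -(n-O(1))$. The construction would proceed by applying the Culler--Shalen theorem \cite{CullerShalen84} once per cusp to obtain surfaces $X^{(i)}$ with $\partial X^{(i)}\cap T_i=\emptyset$, arranging them to be pairwise disjoint via general position and cut-and-paste, and then running the annular-compression algorithm of Theorem \ref{gutsy surface} in a neighborhood of each component to force a non-trivial gut region incident to every $T_i$. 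Combining the CFW/AST inequality $\Vol(M)\geq \Vol(Guts(X))$ with Miyamoto's bound $\Vol(Guts(X))\geq -V_8\,\chi(Guts(X))$ would then yield $\Vol(M)\geq V_8(n-O(1))$, hence $\liminf v_n/n\geq V_8$.

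The hard part will be producing such a coherent disjoint surface family. Culler--Shalen furnishes only one essential surface at a time, and naive iteration fails because cutting along one surface converts the complement into a pared manifold to which the theorem no longer applies; moreover, even arranging the individually-constructed $X^{(i)}$ to be disjoint without destroying their separating properties is nontrivial. A successful argument will likely require either (i) a genuinely $n$-variable Culler--Shalen-type theorem, obtained by studying ideal points of higher-codimension subvarieties of the character variety along which all $n$ cusp subgroups simultaneously degenerate, or (ii) a completely different approach via a sharp horoball-packing density estimate in $\HH^3$ whose asymptotic density matches that of the regular ideal octahedral tiling realized as a universal cover of the $\mathbb{W}_n$ family. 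Either route will require genuinely new geometric input beyond the single-surface topological arguments of this paper, consistent with the author's own caveat that the present methods are unlikely to pin down the minimizer for each $n$.
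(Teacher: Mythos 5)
This statement is a \emph{conjecture} in the paper: the author explicitly does not prove it, so there is no proof of record to compare yours against, and your proposal must be judged on its own terms. The upper bound half is correct and is essentially the observation already made in the paper's concluding section: for the $(n-1)$-fold cyclic cover $\mathbb{W}_n\to\mathbb{W}$ dual to one component, the linking number zero condition makes one cusp torus lift to $n-1$ cusps and the other to a single cusp, so $\mathbb{W}_n$ is an orientable hyperbolic manifold with $n$ cusps and volume $(n-1)V_8$, whence $v_n\le (n-1)V_8$ and $\limsup_n v_n/n\le V_8$. That part stands.

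The lower bound half is a research program, not a proof, and it has a gap beyond the construction difficulty you candidly acknowledge. Even granting a disjoint essential surface system $X$ with a nontrivial acylindrical gut region incident to every cusp, this does \emph{not} force $\chi(Guts(X))\le -(n-O(1))$: the Euler characteristic of a pared manifold does not see the torus components of its parabolic locus, so a single acylindrical gut component with $\chi=-1$ can abut arbitrarily many rank-two cusps of $M$. You would obtain $\chi(Guts(X))\le -n$ only if the $n$ cusps met $n$ \emph{distinct} gut components, and nothing in your outline arranges or even addresses that. Consequently the chain of inequalities from \cite{CFW08,AST05} and \cite{Mi}, as you invoke it, yields only $\Vol(M)\ge V_8$ --- which gives $\liminf v_n/n\ge 0$ and is asymptotically weaker than Adams' bound $v_n\ge nV_3$ \cite{Adams88}. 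On top of this sits the acknowledged obstruction: \cite{CullerShalen84} produces one surface per ideal point, the theorem does not apply after cutting (the complement is only a pared manifold), and there is no known disjointness or cut-and-paste argument preserving $\partial X^{(i)}\cap T_i=\emptyset$ for all $i$ simultaneously. So as written the proposal establishes only the $\limsup$ inequality; the $\liminf$ inequality, which is the substance of the conjecture, remains open, consistent with the author's own caveat that the paper's methods are unlikely to settle it.
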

As mentioned before, Adams has shown that $v_n/n \geq V_3$ \cite{Adams88}. 
It may also be possible to prove:

\begin{conjecture}
The minimal limit volume of non-orientable hyperbolic 3-manifolds
is $V_8$.
\end{conjecture}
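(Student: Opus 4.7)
The plan is to reduce the volume inequality to Theorem \ref{surface volume} and then to classify the equality case by enumerating face pairings of a regular ideal octahedron.

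First I would produce an essential surface disjoint from one cusp. Let $T_1, T_2$ be the two cusp tori. Culler and Shalen's splitting theorem \cite[Theorem 3]{CullerShalen84} supplies an orientable, connected, essential separating surface $X_0 \subset M$ whose boundary is a nonempty collection of essential curves on $T_1$ alone, and after maximal compression I may assume $X_0$ has no compressing annulus along $T_1$. Since $X_0$ is then disjoint from $T_2$, Theorem \ref{surface volume}, applied with $T_2$ in place of $T_1$ there, immediately gives $\Vol(M) \geq V_8$.

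Now assume $\Vol(M) = V_8$. Theorem \ref{surface volume} then supplies a totally geodesic essential surface $X \subset M$ with $\chi(X) = -1$ such that $N := M\split X$ is assembled from a single regular ideal octahedron $O$ by face identifications. I would two-color the eight faces of $O$ alternately black and white, so that the four white faces constitute $\partial N$ while the four black faces are paired internally to form the interior identifications of $N$. I would then enumerate the inequivalent black-face pairings modulo the symmetry group of the octahedron, obtaining a finite list of candidates $N_1, \ldots, N_k$. A direct combinatorial check on each pairing (tracking the induced identifications of the twelve boundary edges and six ideal vertices) determines the topology of $\partial N_j$; I expect this to leave four candidates, one with boundary two thrice-punctured spheres and three with boundary a four-punctured sphere.

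To recover $M$ from each $N_j$, I would reglue $\partial N_j$ to itself, either by pairing two components (two-sided $X$) or by quotienting a connected boundary by a free orientation-reversing involution (one-sided $X$, so $X \cong \RR\PP^2 \setminus \{x,y\}$), subject to the constraint that $M$ has exactly two cusps. The two-pants case gives a unique gluing producing the Whitehead link complement $\mathbb{W}$. For each four-punctured-sphere case I would examine the intrinsic isometry group of $\partial N_j$ and pick out admissible free orientation-reversing involutions that identify the rank one cusps of $\partial N_j$ in pairs sharing a common rank one cusp of $N_j$. I expect one $N_j$ to have isometry group $S_4$ and admit no such involution, and the other two to each admit a unique admissible involution. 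I would then identify the two resulting quotients with $\mathbb{W}$ and $\mathbb{W}'$ by exhibiting an essential twice-punctured $\RR\PP^2$ in each (as in Figure \ref{Whitehead}).

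The main obstacle is the combinatorial classification: enumerating the four black-face pairings modulo the octahedral symmetry group, computing the boundary topology and isometry group of each candidate, and verifying that there are no extra admissible involutions producing a third two-cusped quotient. A secondary subtlety is proving uniqueness of the relevant isometries and correctly matching the two antipodal quotients to $\mathbb{W}$ and $\mathbb{W}'$ rather than to some other hyperbolic link complement.
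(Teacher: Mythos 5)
Your proposal does not address the statement in question. The statement is the paper's \emph{conjecture} that the minimal limit volume of non-orientable hyperbolic 3-manifolds is $V_8$; the paper offers no proof of it, only the remark that one might attack it by producing an essential surface with non-trivial guts disjoint from the orientable cusp of a candidate non-orientable manifold. What you have written is, essentially verbatim, the paper's proof of Theorem \ref{two cusps}, which concerns \emph{orientable} manifolds with \emph{two} cusps. That argument establishes a different result and cannot be transplanted here.

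Concretely, the gap is twofold. First, ``limit volume'' refers (via Jorgensen--Thurston) to volumes of cusped manifolds that arise as limits of volumes of their Dehn fillings; for the non-orientable conjecture the relevant objects include non-orientable manifolds with a \emph{single} orientable cusp (the paper cites four such census manifolds of volume $V_8$), so the Culler--Shalen splitting theorem you invoke --- which produces a surface with boundary on one torus cusp and disjoint from the \emph{other} cusps --- gives you nothing: there is no second cusp to keep the surface away from, and Theorem \ref{surface volume} requires a torus boundary component disjoint from the essential surface. Second, nothing in your argument engages with non-orientability: the octahedron enumeration and the classification of antipodal involutions of $\partial N_i$ are tailored to recovering the two orientable two-cusped manifolds $\mathbb{W}$ and $\mathbb{W}'$, not to characterizing non-orientable manifolds realizing the bound. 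To make progress on the actual conjecture you would need a new existence result for essential surfaces (or surfaces with non-trivial guts) in one-cusped non-orientable manifolds of small volume, which is precisely the open problem the paper flags.
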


In the Snappea census of 3-manifolds \cite{Snappea}, there
are four manifolds of volume $V_8$ which are non-orientable
with a single orientable cusp, which would correspond to
the smallest limit volumes of non-orientable manifolds if this conjecture were true. It seems possible that one could prove this conjecture
by proving the existence of an essential surface disjoint from the orientable cusp with non-trivial
\hyperlink{surfaceguts}{guts}. 

\bibliographystyle{hamsplain}
\bibliography{twocusps_revision.bbl}

\end{document}